\newcommand{\sig}{\mathfrak c_\sigma}
\newcommand{\bsig}{\mathfrak c_{\bar\sigma}}
\newcommand{\IR}{\mathbb R}
\newcommand{\w}{\omega}
\newcommand{\non}{\mathrm{non}}
\newcommand{\cov}{\mathrm{cov}}
\newcommand{\I}{\mathcal I}
\newcommand{\C}{\mathcal C}
\newcommand{\A}{\mathcal A}
\newcommand{\F}{\mathcal F}
\newcommand{\E}{\mathcal E}
\newcommand{\M}{\mathcal M}
\newcommand{\K}{\mathcal K}
\newcommand{\pr}{\mathrm{pr}}
\newtheorem{theorem}{Theorem}
\newtheorem{lemma}{Lemma}
\newtheorem{problem}{Problem}
\theoremstyle{definition}
\newtheorem{remark}{Remark}
\title{$\sigma$-Continuous functions and related cardinal characteristics of the continuum}
\author{Taras Banakh} 
\begin{document}
\begin{abstract} A function $f:X\to Y$ between topological spaces is called {\em $\sigma$-continuous} (resp. {\em $\bar\sigma$-continuous}) if there exists a  (closed) cover $\{X_n\}_{n\in\w}$ of $X$ such that for every $n\in\w$ the restriction $f{\restriction}X_n$ is continuous. By $\sig$ (resp. $\bsig$) we denote the largest cardinal $\kappa\le\mathfrak c$ such that every function $f:X\to\IR$ defined on a subset $X\subset\IR$ of cardinality $|X|<\kappa$ is $\sigma$-continuous (resp. $\bar\sigma$-continuous). It is clear that $\w_1\le\bsig\le\sig\le\mathfrak c$. We prove that $\mathfrak p\le\mathfrak q_0=\bsig=\min\{\sig,\mathfrak b,\mathfrak q\}\le\sig\le\min\{\non(\mathcal M),\non(\mathcal N)\}$.
\end{abstract}
\address{Ivan Franko National University of Lviv (Ukraine) and Jan Kochanowski University in Kielce (Poland)}
\email{t.o.banakh@gmail.com}
\keywords{$\sigma$-continuous function, $\bar\sigma$-continuous function, cardinal characteristic of the continuum}
\subjclass{03E17; 03E50; 54C08}
\maketitle

In this paper we introduce and study two cardinal characteristics of the continuum, related to $\sigma$-continuity. 

A function $f:X\to Y$ between two topological spaces is called {\em $\sigma$-continuous} (resp. {\em $\bar\sigma$-continuous}) if there exists a countable (closed) cover $\mathcal C$ of $X$ such that for every $C\in\mathcal C$ the restriction $f{\restriction}C$ is continuous.

The problem of finding a (Borel) function $f:\IR\to\IR$ which is not $\sigma$-continuous is not trivial and was first asked by Lusin. This problem of Lusin was answered by Sierpi\'nski \cite{S36}, \cite{S37} (under CH), Keldy\v s \cite{Kel}, and later by Adyan, Novikov \cite{AN}, van Mill and Pol \cite{vMP}, Jackson, Mauldin \cite{JM}, Cicho\'n, Morayne, Pawlikowski, and Solecki \cite{CMPS}, \cite{Sol}, Darji \cite{Darji}. By a dichotomy of Zapletal \cite{Zap} (generalized by Pawlikowski and Sabok \cite{PS}), a Borel function $f:X\to Y$ between metrizable analytic spaces  is not $\sigma$-continuous if and only if $f$ contains a topological copy of the Pawlikowski function $P:(\w+1)^\w\to \w^\w$ (which is the countable power  of a bijection $\w+1\to\w$). Since the Pawlikowski function $P$ is not $\sigma$-continuous, the family $$\mathcal I_P=\{X\subset(\w+1)^\w:\mbox{$P{\restriction}X$ is not $\sigma$-continuous}\}$$is a proper $\sigma$-ideal of subsets of the compact metrizable space $(\w+1)^\w$. 


Let $\sig$ (resp. $\bsig$) be the largest cardinal $\kappa$ such that every function $f:X\to \IR$ defined on a subset $X\subset\IR$ of cardinality $|X|<\kappa$ is $\sigma$-continuous (resp. $\bar\sigma$-continuous).
 It is clear that $$\w_1\le\bsig\le\sig\le\mathfrak c,$$ so $\bsig$ and $\sig$ are typical small uncountable cardinals in the interval $[\w_1,\mathfrak c]$.

In this paper we establish the relation of the cardinals $\bsig$ and $\sig$ to some known cardinal characteristics of the continuum: $\non(\mathcal M)$, $\non(\mathcal N)$, $\mathfrak p$, $\mathfrak b$, $\mathfrak q_0$, $\mathfrak q$.

Let us define their definitions. By $\w$ we denote the smallest infinite cardinal, by $2^\w$ the Cantor cube $\{0,1\}^\w$, and by $\mathfrak c$ the cardinality of $2^\w$. Let $\mathcal M$ be the ideal of meager sets in the real line, $\mathcal N$ be the ideal of Lebesgue null sets in $\IR$, and $\K_\sigma$ be the $\sigma$-ideal, generated by compact subsets of $\w^\w$. For a family of sets $\mathcal I$ with $\bigcup\I\notin\I$ let $\non(\I)=\min\{|A|:A\subset \bigcup\I\;\;A\notin\I\}$. The cardinal $\non(\K_\sigma)$ is usually denoted by $\mathfrak b$, see \cite{Blass}. 

Let $\mathfrak p$ be the smallest cardinality of a family $\mathcal F$ of infinite subsets of $\w$ such that every finite subfamily $\mathcal E\subset\mathcal F$ has infinite intersection and for any infinite set $I\subset\w$ there exists a set $F\in\mathcal F$ such that $I\setminus F$ is infinite. 

It is known \cite{vD}, \cite{Vau}, \cite{Blass} that $\mathfrak p\le\mathfrak b\le\non(\mathcal M)\le\mathfrak c$, and $\mathfrak p=\mathfrak c$ is equivalent to Martin's Axiom for $\sigma$-centered posets. 

A subset $A\subset \IR$ is called a {\em $Q$-set} if each subset of $A$ is of type $F_\sigma$ in $A$. Let 
\begin{itemize}
\item $\mathfrak q_0$ be the smallest cardinality of a subset $A\subset\IR$, which is not a $Q$-set, and
\item $\mathfrak q$ be the smallest cardinal $\kappa$ such that every subset $X\subset\IR$ of cardinality $|X|\ge\kappa$ is not a $Q$-set.
\end{itemize}
It is known \cite{BMZ} that $$\mathfrak p\le\mathfrak q_0\le\min\{\mathfrak b,\non(\mathcal N),\mathfrak q\}\le\mathfrak q\le\log(\mathfrak c^+),$$ where $\log\mathfrak c^+=\min\{\kappa:2^\kappa>\mathfrak c\}$.
More information on $Q$-sets and the cardinals $\mathfrak q_0$ and $\mathfrak q$ can be found in \cite[\S4]{Miller} and  \cite{BMZ}. The following theorem was proved on 12.09.2019 during the XXXIII International Summer Conference on Real Function Theory in Ustka at the Baltic sea.

\begin{theorem}\label{t:ustka} Let $X,Y$ be separable metrizable spaces. If $|X|<\mathfrak b$, then each Borel function $f:X\to Y$ is $\bar\sigma$-continuous.
\end{theorem}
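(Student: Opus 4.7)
My plan is to reduce to the case of $Y$ Polish (by replacing $Y$ with its completion; a function into $Y$ is Borel if and only if it is Borel as a map into the completion), and then apply the Lebesgue--Hausdorff classification: each Borel $f\colon X\to Y$ is of Baire class $\alpha$ for some $\alpha<\w_1$. I induct on $\alpha$. The base case of continuous $f$ is trivial (take the cover $\{X\}$); the heart of the argument is the Baire class $1$ case, to which the inductive step will reduce.

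For the Baire class $1$ case, write $f=\lim_n g_n$ with $g_n\colon X\to Y$ continuous, and fix a complete metric $d$ on $Y$. For each $x\in X$ define the rate of convergence $\phi(x)\in\w^\w$ by
$$\phi(x)(k)=\min\bigl\{N:d(g_\ell(x),f(x))<1/(k+1)\text{ for all }\ell\ge N\bigr\}.$$
Since $|\phi(X)|\le|X|<\mathfrak b$, there is $\phi^*\in\w^\w$ with $\phi(x)\le^*\phi^*$ for every $x\in X$; we may assume $\phi^*$ is non-decreasing. Define the closed (``Cauchy'') sets
$$\tilde X_N=\bigl\{x\in X:d(g_\ell(x),g_m(x))\le 2/(k+1)\text{ for all }k\ge N\text{ and all }\ell,m\ge\phi^*(k)\bigr\}.$$
The defining condition involves only the continuous functions $g_\ell,g_m$, so each $\tilde X_N$ is closed in $X$, and the bound on $\phi$ yields $X=\bigcup_N\tilde X_N$. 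On $\tilde X_N$ the sequence $(g_\ell)_{\ell\ge\phi^*(N)}$ is uniformly Cauchy, hence uniformly convergent to a continuous limit $h\colon\tilde X_N\to Y$; since $\tilde X_N\subset X$ one has $h(x)=\lim_\ell g_\ell(x)=f(x)$, so $f{\restriction}\tilde X_N=h$ is continuous.

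For the inductive step at Baire class $\alpha\ge 2$, write $f=\lim_n f_n$ with each $f_n$ of Baire class $<\alpha$. By the induction hypothesis and the gluing lemma for finite unions of closed sets, each $f_n$ admits an \emph{increasing} closed cover $\{F^{(n)}_m\}_{m\in\w}$ of $X$ on which $f_n$ is continuous. Setting $\phi(x)(n)=\min\{m:x\in F^{(n)}_m\}$ and choosing $\phi^*\in\w^\w$ with $\phi(x)\le^*\phi^*$ for all $x$, the closed sets $\tilde X_N=\bigcap_{n\ge N}F^{(n)}_{\phi^*(n)}$ cover $X$ and satisfy that $f_n{\restriction}\tilde X_N$ is continuous for every $n\ge N$. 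Hence $f{\restriction}\tilde X_N=\lim_n f_n{\restriction}\tilde X_N$ is Baire class $1$ on $\tilde X_N$, and since $|\tilde X_N|<\mathfrak b$ the Baire class $1$ case supplies a closed $\bar\sigma$-decomposition of $\tilde X_N$ on which $f$ is continuous; taking the union over $N$ yields the required decomposition of $X$. The central technical point throughout is that the pieces $\tilde X_N$ must be \emph{closed} in $X$ rather than merely Borel; this is ensured by phrasing each defining condition in terms of continuous objects (the $g_\ell$, or the closed cover sets $F^{(n)}_m$) and never in terms of $f$ itself. The cardinal $\mathfrak b$ enters exactly once per induction step, converting the $X$-indexed family of rate functions into a single $\le^*$-dominating sequence and thereby producing a \emph{countable} closed cover.
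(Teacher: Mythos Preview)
Your argument is correct, and genuinely different from the paper's. The paper does not induct on Baire class at all: it embeds $X,Y$ into Polish spaces $\tilde X,\tilde Y$, fixes a base $\{V_n\}$ of $\tilde Y$, writes $f^{-1}(V_n)=X\cap B_n$ with $B_n$ Borel in $\tilde X$, and invokes the change-of-topology theorem (Kechris~13.5) to obtain a continuous bijection $g\colon\tilde Z\to\tilde X$ from a Polish space $\tilde Z$ in which every $g^{-1}(B_n)$ is open; then $f\circ g$ is continuous on $Z=g^{-1}(X)$. Since $|Z|<\mathfrak b$, the set $Z$ lies in a $\sigma$-compact subset $\bigcup_n A_n$ of $\tilde Z$, and on each compact $A_n$ the map $g$ is a homeomorphism, so $f$ is continuous on the closed pieces $g(A_n)\cap X$. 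Thus the paper uses $\mathfrak b$ once, in its ``non-$\K_\sigma$'' guise $\non(\K_\sigma)$, whereas you use it (possibly transfinitely many times) in its ``bounding'' guise, dominating the rate functions $\phi(x)$. Your route is more elementary in that it avoids the topology-refinement machinery and makes the role of closedness of the pieces explicit; the paper's route is shorter and avoids the Lebesgue--Hausdorff hierarchy and the transfinite induction altogether, extracting the closed cover in one stroke from compactness in the refined topology.
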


\begin{proof} Embed the metrizable separable spaces $X,Y$ into Polish spaces $\tilde X$ and $\tilde Y$, respectively.  Let $\{V_n\}_{n\in\w}$ be a countable base of the topology of the Polish space $\tilde Y$. Since the function $f$ is Borel, for every $n\in\w$ the preimage $f^{-1}(V_n)$ is a Borel subset of $X$ and hence $f^{-1}(V_n)=X\cap B_n$ for some Borel set $B_n$ in the Polish space $\tilde X$.

By \cite[13.5]{Ke}, there exists a continuous bijective map $g:\tilde Z\to  \tilde X$ from a Polish space $\tilde Z$  such that for every $n\in\w$ the Borel set $g^{-1}(B_n)$ is open in $\tilde Z$. Consider the subspace $Z=g^{-1}(X)$ of the Polish space $\tilde Z$ and observe that the map $h=f\circ g{\restriction}Z$ is continuous. Indeed, for any $n\in\w$ the preimage $h^{-1}(V_n)$ of the basic open set $V_n$ equals $Z\cap g^{-1}(B_n)$ and hence is open in $Z$. 

Since $|Z|=|g^{-1}(X)|=|X|<\mathfrak b$,  the set $Z=g^{-1}(X)$ is contained in some $\sigma$-compact subset $A$ of $\tilde Z$. Write the set $A$ as the countable union $A=\bigcup_{n\in\w}A_n$ of compact sets $A_n$ in $\tilde Z$. For every $n\in\w$ the image $K_n:=g(A_n)\subset \tilde X$ of $A_n$ is compact and the continuous bijective function $g{\restriction}A_n:A_n\to K_n$ is a homeomorphism and so is the function $g{\restriction}A_n\cap Z:A_n\cap Z\to K_n\cap X$. Then the restriction $f{\restriction}K_n\cap X=f\circ g\circ (g{\restriction}A_n\cap Z)^{-1}$ is continuous, and the function $f$ is $\bar \sigma$-continuous.
\end{proof}

We say that two subsets $A,B$ of a topological space $X$ {\em can be separated by $\sigma$-compact sets} if there are two disjoint $\sigma$-compact subsets $\tilde A$ and $\tilde B$ of $X$ such that $A\subset\tilde A$ and $B\subset\tilde B$. 

\begin{lemma}\label{l:q} The cardinal $\mathfrak q_0$ is equal to the largest cardinal $\kappa$ such that any two disjoint sets $A,B\subset 2^\w$ of cardinality $\max\{|A|,|B|\}<\kappa$ can be separated by $\sigma$-compact sets in $2^\w$.
\end{lemma}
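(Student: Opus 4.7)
The plan is to prove that $\mathfrak q_0$ both satisfies the separation property and is maximal for it.

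For the first half (separation at $\kappa=\mathfrak q_0$), suppose $A,B\subset 2^\w$ are disjoint with $\max\{|A|,|B|\}<\mathfrak q_0$. The union $X:=A\cup B$ satisfies $|X|<\mathfrak q_0$, so $X$ is a $Q$-set, and hence $A$ is $F_\sigma$ in $X$. This yields an $F_\sigma$ (equivalently $\sigma$-compact) set $\tilde A\subset 2^\w$ with $\tilde A\cap X=A$; in particular $A\subset\tilde A$ and $\tilde A\cap B=\emptyset$. The complement $Y:=2^\w\setminus\tilde A$ is a $G_\delta$ subset of the Polish space $2^\w$, hence itself Polish, and contains $B$. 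Since $|B|<\mathfrak q_0\le\mathfrak b$ (cited from \cite{BMZ}), the standard fact that a subset of a Polish space of cardinality less than $\mathfrak b$ is contained in a $\sigma$-compact subset (the tool already invoked in the proof of Theorem~\ref{t:ustka}) gives a $\sigma$-compact $\tilde B\subset Y$ with $B\subset\tilde B$. Then $(\tilde A,\tilde B)$ is the desired disjoint $\sigma$-compact separation.

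For the maximality of $\mathfrak q_0$, I would choose $X\subset 2^\w$ of cardinality $\mathfrak q_0$ that is not a $Q$-set, take a subset $A\subset X$ that is not $F_\sigma$ in $X$, and set $B:=X\setminus A$. Then $\max\{|A|,|B|\}\le\mathfrak q_0$. If there were disjoint $\sigma$-compact sets $\tilde A\supset A$ and $\tilde B\supset B$, then the inclusion $X\subset\tilde A\cup\tilde B$ together with $B\cap\tilde A\subset\tilde A\cap\tilde B=\emptyset$ would force $X\cap\tilde A=A$, making $A$ an $F_\sigma$ subset of $X$ and contradicting the choice of $A$. So no separation exists for this pair, and the separation property fails for every cardinal $\kappa>\mathfrak q_0$; the same argument, run for arbitrary $A\subset X$, also shows conversely that the separation property at $\kappa$ forces every $X$ with $|X|<\kappa$ to be a $Q$-set.

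The main obstacle in the first half is the construction of the disjoint $\sigma$-compact neighborhood $\tilde B$ of $B$. The $Q$-set property of $X$ only extends $B$ to an $F_\sigma$ subset $F_B$ of $2^\w$ that may meet $\tilde A$ at points of $2^\w\setminus X$, and naive trimming such as $F_B\setminus\tilde A$ yields an $F_\sigma\cap G_\delta$ set that need not be $F_\sigma$ in $2^\w$. One is therefore forced to leave the $Q$-set structure and invoke the external inequality $\mathfrak q_0\le\mathfrak b$ to pull a $\sigma$-compact cover of $B$ out of the Polish space $Y=2^\w\setminus\tilde A$.
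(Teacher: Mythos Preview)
Your proof is correct and follows essentially the same route as the paper's. Both arguments hinge on the same two ingredients: the $Q$-set property of $A\cup B$ yields a $\sigma$-compact $\tilde A$ with $\tilde A\cap(A\cup B)=A$, and then the inequality $\mathfrak q_0\le\mathfrak b$ is invoked to cover $B$ by a $\sigma$-compact subset of the Polish space $2^\w\setminus\tilde A$; the converse direction in both cases reduces to the observation that separability of every pair $A,\,X\setminus A$ forces $X$ to be a $Q$-set. The only cosmetic difference is that the paper introduces an auxiliary cardinal $\mathfrak q_0'$ and argues $\mathfrak q_0'\ge\mathfrak q_0$ by contradiction, whereas you prove the separation property at $\kappa=\mathfrak q_0$ directly. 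One small point worth making explicit (glossed over in the paper as well): your maximality argument assumes a non-$Q$-set of size $\mathfrak q_0$ can be found \emph{inside} $2^\w$, which uses that any $X\subset\IR$ of size $<\mathfrak c$ is zero-dimensional (hence embeds in $2^\w$), together with the trivial case $\mathfrak q_0=\mathfrak c$ where $2^\w$ itself is not a $Q$-set.
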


\begin{proof} Let $\mathfrak q_0'$ be  is equal to the largest cardinal $\kappa$ such that any two disjoint sets $A,B\subset 2^\w$ of cardinality $\max\{|A|,|B|\}<\kappa$ can be separated by $\sigma$-compact sets in $2^\w$. Then any set $X\subset 2^\w$ of cardinality $|X|<\mathfrak q_0'$ is a $Q$-set, which implies that $\mathfrak q_0'\le \mathfrak q_0$. Assuming that $\mathfrak q_0'<\mathfrak q_0$, we can find two disjoint sets $A,B\subset 2^\w$ of cardinality $|A\cup B|=\mathfrak q_0'<\mathfrak q_0$ such that $A$ and $B$ cannot be separated by $\sigma$-compact subsets of $2^\w$. We can identify the Cantor cube $2^\w$ with a subspace of the real line. Since $|A\cup B|<\mathfrak q_0$, the subset $X=A\cup B\subset 2^\w\subset\IR$ is a $Q$-set and hence $A$ is an $F_\sigma$-set in $X$. Then $A=K\cap X$ for some $\sigma$-compact set $K\subset 2^\w$. Since the Polish space $P=2^\w\setminus K$ is a continuous image of $\w^\w$, we can use the definition of the cardinal $\mathfrak b\ge\mathfrak q_0>|B|$ and prove that  $B$  is contained in a $\sigma$-compact subset $\Sigma$ of $P$. Then $K$ and $\Sigma$ are disjoint $\sigma$-compact sets separating the sets $A,B$ in $2^\w$, which contradicts the choice of the sets $A,B$. This contradiction shows that $\mathfrak q_0'=\mathfrak q_0$.
\end{proof}


Having in mind the characterization of $\mathfrak q_0$ in Lemma~\ref{l:q}, let us consider two modifications of $\mathfrak q_0$. Namely, let
\begin{itemize}
\item $\mathfrak q_1$ be  the smallest cardinal $\kappa$ for which there exists a subset $A\subset 2^\w$ of cardinality $|A|\le\kappa$ and a family $\mathcal B$ of compact subsets of $2^\w$ with $|\mathcal B|\le\kappa$ such that the sets $A$ and $\bigcup\mathcal B$ are disjoint but cannot be separated by $\sigma$-compact sets in $2^\w$;
\item $\mathfrak q_2$ be the smallest cardinal $\kappa$ for which there exist families $\mathcal A,\mathcal B$ of compact subsets of $2^\w$ with $\max\{|\mathcal A|,|\mathcal B|\}\le\kappa$ such that the sets $\bigcup\mathcal A$ and $\bigcup\mathcal B$ are disjoint but cannot be separated by $\sigma$-compact sets in $2^\w$.
\end{itemize}
This definition and Lemma~\ref{l:q} imply that $$\w_1\le\mathfrak q_2\le\mathfrak q_1\le\mathfrak q_0\le\mathfrak q\le\log(\mathfrak c^+)\le\mathfrak c.$$


The following theorem is the main result of this paper.

\begin{theorem} $\mathfrak p\le \mathfrak q_2\le\mathfrak q_1\le \mathfrak q_0=\bsig=\min\{\sig,\mathfrak b,\mathfrak q\}\le\sig\le\min\{\non(\mathcal M),\non(\mathcal N),\non(\mathcal I_P)\}$.
\end{theorem}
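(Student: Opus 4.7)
The plan is to verify the chain link by link. Some are immediate: $\mathfrak q_2\le\mathfrak q_1\le\mathfrak q_0$ from the definitions, $\bsig\le\sig$ because every closed cover is a cover, $\mathfrak q_0\le\mathfrak b$ and $\mathfrak q_0\le\mathfrak q$ from \cite{BMZ}, and $\min\{\sig,\mathfrak b,\mathfrak q\}\le\sig$ trivially. That leaves four substantive claims: $\mathfrak q_0=\bsig$, $\bsig=\min\{\sig,\mathfrak b,\mathfrak q\}$, $\sig\le\min\{\non(\M),\non(\mathcal N),\non(\I_P)\}$, and $\mathfrak p\le\mathfrak q_2$.

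For $\bsig=\mathfrak q_0$, the easy half $\bsig\le\mathfrak q_0$ uses that an indicator $\chi_S\colon X\to\{0,1\}$ is $\bar\sigma$-continuous iff both $S$ and $X\setminus S$ are $F_\sigma$ in $X$; for a non-$Q$-set $A\subset\IR$ of size $\mathfrak q_0$ and a non-$F_\sigma$ subset $S\subset A$, the indicator of $S$ witnesses $\bsig\le\mathfrak q_0$. The reverse $\mathfrak q_0\le\bsig$ is the main construction: given $|X|<\mathfrak q_0$ and $f\colon X\to 2^\w$, for each $n$ apply Lemma~\ref{l:q} to separate $f^{-1}(\{y\in 2^\w:y(n)=0\})$ and $f^{-1}(\{y\in 2^\w:y(n)=1\})$ by disjoint $\sigma$-compacts in $2^\w$, written as nested unions of compacts $A_{n,k}\subset A_{n,k+1}$ and $B_{n,k}\subset B_{n,k+1}$. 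Put $\varphi_x(n)=\min\{k:x\in A_{n,k}\cup B_{n,k}\}$; by $\mathfrak q_0\le\mathfrak b$ there is $\varphi_0\in\w^\w$ with $\varphi_x\le^*\varphi_0$ for every $x\in X$. Then $X_N=X\cap\bigcap_{n\ge N}(A_{n,\varphi_0(n)}\cup B_{n,\varphi_0(n)})$ is closed in $X$ and all coordinates of $f$ beyond index $N$ are continuous on $X_N$ because the two defining compacts are disjoint. Partitioning $X_N$ by the finitely many values $f\restriction\{0,\dots,N-1\}$ and decomposing each resulting piece into countably many closed-in-$X$ subsets using that $X$ is a $Q$-set yields the required $\bar\sigma$-continuous decomposition.

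The equality $\bsig=\min\{\sig,\mathfrak b,\mathfrak q\}$ splits as $\bsig\le\min$, which follows from $\bsig\le\sig$ and $\bsig=\mathfrak q_0\le\mathfrak b,\mathfrak q$, and $\bsig\ge\min$, which is the main obstacle. For the latter I would assume $|X|<\sig,\mathfrak b,\mathfrak q$ and $f\colon X\to\IR$, use $|X|<\sig$ to write $X=\bigcup_n X_n$ with $f\restriction X_n$ continuous, Lavrentiev-extend each $f\restriction X_n$ to a continuous $\tilde f_n\colon G_n\to\IR$ on a $G_\delta$-superset $G_n$ of $X_n$ in $\IR$, disjointify as $X'_n=X_n\setminus\bigcup_{m<n}X_m$, and embed $X$ into the Polish space $\tilde Z=\bigsqcup_n G_n\times\{n\}$ via $x\mapsto(x,n)$ for $x\in X'_n$. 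By $|X|<\mathfrak b$ and the construction in Theorem~\ref{t:ustka}, the image lies inside a $\sigma$-compact set $\bigcup_k K'_k\times\{n_k\}$ of $\tilde Z$, producing a cover $X=\bigcup_k(X'_{n_k}\cap K'_k)$ with $f$ continuous on each piece. The closing step will be to promote this $\sigma$-cover to a cover by $F_\sigma$-in-$X$ pieces using $|X|<\mathfrak q$ --- an argument combining the Borel structure of the $G_n$'s with the combinatorial content of $\mathfrak q$ --- and this upgrade is what I expect to require the most care.

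Finally, $\sig\le\non(\I_P)$ is immediate because any witness for $\non(\I_P)$ is a subset of $(\w+1)^\w$ on which $P$ is not $\sigma$-continuous, and this set embeds into $\IR$. The bounds $\non(\I_P)\le\non(\M)$ and $\non(\I_P)\le\non(\mathcal N)$ reduce to $\I_P\subset\M\cap\mathcal N$, i.e.\ that every set on which $P$ is $\sigma$-continuous is meager and null --- a standard consequence of the Zapletal--Pawlikowski--Sabok dichotomy cited in the introduction. For $\mathfrak p\le\mathfrak q_2$, given disjoint families $\mathcal A,\mathcal B$ of fewer than $\mathfrak p$ compacts in $2^\w$, enumerate the countably many clopens of $2^\w$ as $\{V_n\}_{n\in\w}$; for each $(K,L)\in\mathcal A\times\mathcal B$ the set $S_{K,L}=\{n:K\subset V_n\text{ and }V_n\cap L=\emptyset\}$ is infinite (by clopen separation in the zero-dimensional space $2^\w$), and the family $\{S_{K,L}\}$ has the strong finite intersection property; since it has fewer than $\mathfrak p$ elements, the definition of $\mathfrak p$ yields a pseudo-intersection $I\subset\w$, from which a $\sigma$-centered forcing argument via Bell's theorem extracts the desired $\sigma$-compact $\tilde A\supset\bigcup\mathcal A$ disjoint from $\bigcup\mathcal B$. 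The matching $\sigma$-compact $\tilde B\supset\bigcup\mathcal B$ sits inside the Polish $2^\w\setminus\tilde A$ by a $\mathfrak b$-additivity argument, since $|\mathcal B|<\mathfrak p\le\mathfrak b$.
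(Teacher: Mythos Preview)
Your outline has two genuine gaps. First, in the step $\bsig\ge\min\{\sig,\mathfrak b,\mathfrak q\}$ you plan to ``promote this $\sigma$-cover to a cover by $F_\sigma$-in-$X$ pieces using $|X|<\mathfrak q$''. But $|X|<\mathfrak q$ does \emph{not} say that $X$ is a $Q$-set; it only says that \emph{some} set of that size is a $Q$-set. Nothing in your construction lets you transfer the $Q$-set property to the particular $X$ you started with, and indeed the whole point of the cardinal $\mathfrak q_0$ is that there may be non-$Q$-sets of size below $\mathfrak q$. The paper sidesteps this by arguing contrapositively: assuming $\mathfrak q_0<\min\{\mathfrak b,\mathfrak q\}$, it picks a non-$Q$-set $X$ and a $Q$-set $Y$ both of size $\mathfrak q_0$ (the latter exists precisely because $\mathfrak q_0<\mathfrak q$), takes any bijection $f:X\to Y$, and shows via Lavrentiev extensions and $\mathfrak b>\mathfrak q_0$ that if both $f$ and $f^{-1}$ were $\sigma$-continuous then the non-$F_\sigma$ subset of $X$ would in fact be $F_\sigma$. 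This yields $\sig\le\mathfrak q_0$, which is what is needed.

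Second, for $\sig\le\non(\mathcal M)$ and $\sig\le\non(\mathcal N)$ you assert $\mathcal I_P\subset\mathcal M\cap\mathcal N$ as ``a standard consequence'' of the Zapletal--Pawlikowski--Sabok dichotomy. That dichotomy characterizes when a Borel function contains a copy of $P$; it does not give measure or category information about sets in $\mathcal I_P$. The paper does \emph{not} route these inequalities through $\non(\mathcal I_P)$ at all. Instead it uses Grzegorek's theorem that there is a perfectly meager set of size $\non(\mathcal M)$ and the Grzegorek--Ryll-Nardzewski theorem that there is a universally null set of size $\non(\mathcal N)$; a bijection from a non-meager (resp.\ non-null) set onto such a set cannot be $\sigma$-continuous together with its inverse (resp.\ cannot be $\sigma$-continuous), by a Lavrentiev-extension argument. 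Your sketch for $\mathfrak p\le\mathfrak q_2$ is salvageable (the pseudo-intersection $I$ already gives $\tilde A=\bigcup_m\bigcap_{n\in I,\,n\ge m}V_n$ directly, no appeal to Bell's theorem is needed), though the paper uses a different centered family living in $[2^{<\w}]^{<\w}$. Your argument for $\mathfrak q_0\le\bsig$ is correct and more hands-on than the paper's, which simply observes that on a set of size $<\mathfrak q_0$ every function is Borel and then invokes Theorem~\ref{t:ustka}.
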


The proof of this theorem is divided into a series of lemmas.

\begin{lemma} $\mathfrak p\le\mathfrak q_2$.
\end{lemma}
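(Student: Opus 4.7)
The plan is to use the pseudo-intersection characterization of $\mathfrak p$. There are only countably many clopen subsets of $2^\w$, and hence only countably many ordered pairs $(U,V)$ of disjoint clopens; fix once and for all an enumeration $\{(U_n,V_n):n\in\w\}$ of all such pairs. Enumerate $\A=\{A_\alpha:\alpha<\kappa_1\}$ and $\mathcal B=\{B_\beta:\beta<\kappa_2\}$ with $\kappa_1,\kappa_2<\mathfrak p$, and for each pair of indices set
\[ I_{\alpha,\beta}=\{n\in\w:A_\alpha\subseteq U_n\text{ and }B_\beta\subseteq V_n\}. \]

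The key step is to check that each $I_{\alpha,\beta}$ is infinite and that the family $\{I_{\alpha,\beta}:\alpha<\kappa_1,\beta<\kappa_2\}$ has the strong finite intersection property. Both reduce to the elementary observation that any two disjoint compact sets $K,L\subseteq 2^\w$ admit infinitely many pairs of disjoint clopens $(U,V)$ with $K\subseteq U$ and $L\subseteq V$: start from one such separation (which exists by zero-dimensionality of $2^\w$) and enlarge $U$ by basic clopens disjoint from $V\cup L$ to produce as many as desired. Applied to $K=\bigcup_{i\le k}A_{\alpha_i}$ and $L=\bigcup_{i\le k}B_{\beta_i}$ this yields SFIP of the family, while applied to a single pair $(A_\alpha,B_\beta)$ it gives infiniteness of $I_{\alpha,\beta}$.

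Since the family has cardinality at most $\kappa_1\cdot\kappa_2<\mathfrak p$, the definition of $\mathfrak p$ delivers an infinite pseudo-intersection $J\subseteq\w$. Writing $J=\{n_0<n_1<\cdots\}$, I define
\[ \tilde A=\bigcup_{m_0\in\w}\bigcap_{m\ge m_0}U_{n_m}, \qquad \tilde B=\bigcup_{m_0\in\w}\bigcap_{m\ge m_0}V_{n_m}, \]
i.e.\ $\tilde A=\liminf_m U_{n_m}$ and $\tilde B=\liminf_m V_{n_m}$. Each inner intersection is a countable intersection of clopens, hence closed in the compact space $2^\w$ and therefore compact, so $\tilde A$ and $\tilde B$ are $\sigma$-compact.

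The three verifications to finish are routine. For disjointness, any common point $x\in\tilde A\cap\tilde B$ would have to lie in $U_{n_m}\cap V_{n_m}=\emptyset$ for all sufficiently large $m$, which is impossible. For containment of $\bigcup\A$ (the degenerate case $\mathcal B=\emptyset$ is handled by $\tilde A=2^\w$, $\tilde B=\emptyset$), fix any $\beta_0$: the relation $J\subseteq^* I_{\alpha,\beta_0}$ gives $A_\alpha\subseteq U_{n_m}$ for all but finitely many $m$, whence $A_\alpha\subseteq\tilde A$; containment of $\bigcup\mathcal B$ is symmetric. I expect the SFIP verification to be the main obstacle, since it requires a simultaneous clopen separation of any finitely many compacta from $\A$ from any finitely many compacta from $\mathcal B$; once that elementary enlargement lemma is in hand, the $\liminf$ construction yields the desired $\sigma$-compact separators automatically.
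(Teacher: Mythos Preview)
Your argument is correct in spirit and, once a small edge case is handled, complete. The ``elementary observation'' that any two disjoint compacts $K,L\subseteq 2^\w$ admit \emph{infinitely many} separating clopen pairs fails precisely when $K\cup L=2^\w$; in that case $K$ and $L$ are themselves complementary clopens and the unique separating pair is $(K,L)$. But then $\bigcup\A\supseteq K$ and $\bigcup\mathcal B\supseteq L$ force $\bigcup\A=K$ and $\bigcup\mathcal B=L$, so the separation is already by compacts and there is nothing to prove. You should insert a line dispatching this degenerate situation before invoking $\mathfrak p$; otherwise some $I_{\alpha,\beta}$ (or some finite intersection) could be a singleton and the pseudo-intersection machinery does not literally apply. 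Your ``enlarge $U$'' justification also needs a symmetric ``or shrink $U$'' clause to cover the case $U\cup V=2^\w$, but that is cosmetic.

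Your route genuinely differs from the paper's. The paper encodes the problem over the countable set $[2^{<\w}]^{<\w}$: for each pair $A\in\A$, $B\in\mathcal B$ and each $n\in\w$ it takes the family of finite subtrees that miss every branch through $A$ and meet every branch through $B$ at least $n$ times, obtains a pseudo-intersection $\{F_k\}_{k\in\w}$, and then builds only \emph{one} $\sigma$-compact set $\bigcup_n K_n\supseteq\bigcup\A$ lying in $2^\w\setminus\bigcup\mathcal B$; the second separator is produced by a further appeal to $\mathfrak b\ge\mathfrak p$ inside the residual Polish space. Your encoding via the countable set of disjoint clopen pairs is more transparent, and the symmetric $\liminf$ construction delivers both $\sigma$-compact separators $\tilde A,\tilde B$ simultaneously, avoiding the detour through $\mathfrak b$. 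The paper's encoding, on the other hand, makes the infiniteness and SFIP verifications entirely routine with no edge cases, at the cost of the asymmetric finish.
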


\begin{proof} Given any non-empty families $\A,\mathcal B$ of compact sets of $2^\w$ with $\max\{|\mathcal A|,|\mathcal B|\}<\mathfrak p$ and $(\bigcup\A)\cap(\bigcup\mathcal B)=\emptyset$, we shall prove that $\bigcup\A$ and $\bigcup\mathcal B$ can be separated by $\sigma$-compact subsets of $2^\w$. Identify the Cantor cube $2^\w$ with the set of branches of the binary tree $2^{<\w}=\bigcup_{n\in\w}2^n$. For every $n\in\w$ let $\pr_n:2^\w\to 2^n$, $\pr_n:x\mapsto x{\restriction}n$, be the projection of $2^\w$ onto $2^\w$. 

Let $[2^{<\w}]^{<\w}$ be the family of finite subsets of $2^{<\w}$.
For any disjoint compact subset $A,B\subset 2^\w$ and $n\in\w$ consider the set
$$\F_{A,B,n}=\bigcap_{a\in A}\{F\in [2^{<\w}]^{<\w}:F\cap \{a{\restriction}i\}_{i\in\w}=\emptyset\}\cap\bigcap_{b\in B} \{F\in [2^{<\w}]^{<\w}:|F\cap \{b{\restriction}i\}_{i\in\w}|\ge n\}.$$
We claim that this set is infinite. Indeed, since $A,B$ are compact disjoint sets in $2^\w$, there exists $m\in\w$ such that $\pr_m(A)\cap\pr_m(B)=\emptyset$. Then for every $k\ge n$ the set $F_k=\bigcup_{i=m}^{m+k}\pr_i(B)$ belongs to the family $\F_{A,B,n}$.

We claim that the family $$\F=\{\F_{A,B,n}:A\in\A,\;B\in\mathcal B,\;n\in\w\}$$ is infinitely centered in the sense that each finite non-empty subfamily $\mathcal E\subset\F$ has infinite intersection. Indeed, write the family $\E$ as $\E=\{\F_{A_i,B_i,n_i}\}_{i=1}^k$ for some $A_1,\dots,A_k\in\A$, $B_1,\dots,B_k\in\mathcal B$, $n_1,\dots,n_k\in\w$. Consider the compact sets $A=\bigcup_{i=1}^kA_i$ and $B=\bigcup_{i=1}^kB_i$ and observe that they are disjoint as $A\cap B\subset(\bigcup\A)\cap(\bigcup\mathcal B)=\emptyset$. Let $n=\max_{1\le i\le k}n_i$ observe that $\F_{A,B,n}\subset\bigcap\E$, which implies that the intersection $\bigcap\E$ is infinite. 

Since $|\F|=|\w\times\A\times\mathcal B|<\mathfrak p$, the family $\F$ has infinite pseudointersection $\{F_k\}_{k\in\w}\subset [2^{<\w}]^{<\w}$, which means that for every $A\in\A$, $B\in\mathcal B$ and $n\in\w$ the set $\{k\in\w:F_k\notin\F_{A,B,n}\}$ is finite.

For every $n\in\w$ consider the closed subset $$K_n=\bigcap_{k\ge n}\{x\in 2^\w:F_k\cap\{x{\restriction}i\}_{i\in\w}=\emptyset\}=\bigcap_{k\ge n}\bigcap_{i\in\w}\{x\in 2^\w:x{\restriction}i\notin F_k\}$$ of $2^\w$ and observe that $\bigcup\A\subset \bigcup_{n\in\w}K_n\subset 2^\w\setminus \bigcup\mathcal B$. It follows that $2^\w\setminus\bigcup_{n\in\w}K_n$ is a Polish space containing the set $\bigcup\mathcal B$. The definition of the cardinal $\mathfrak b\ge \mathfrak p>|\mathcal B|$ ensures that $\bigcup\mathcal B$ is contained in some $\sigma$-compact subset $\Sigma$ of the Polish space $2^\w\setminus\bigcup_{n\in\w}K_n$. Then $\bigcup_{n\in\w}K_n$ and $\Sigma$ are two disjoint $\sigma$-compact sets separating the sets $\bigcup\A$ and $\bigcup\mathcal B$ in the Cantor cube $2^\w$. 
\end{proof}

\begin{lemma} $\bsig=\mathfrak q_0=\min\{\sig,\mathfrak b,\mathfrak q\}$.
\end{lemma}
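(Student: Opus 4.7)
My plan is to split the three-way equality $\bsig=\mathfrak q_0=\min\{\sig,\mathfrak b,\mathfrak q\}$ into the two inequalities $\bsig\le\mathfrak q_0$ and $\mathfrak q_0\le\bsig$, and then to deduce the coincidence with $\min\{\sig,\mathfrak b,\mathfrak q\}$ using only the bounds $\mathfrak q_0\le\min\{\mathfrak b,\mathfrak q\}$ cited after the definition of $\mathfrak q_0$ and the trivial implication that every $\bar\sigma$-continuous function is $\sigma$-continuous.

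The inequality $\bsig\le\mathfrak q_0$ is witnessed by a characteristic function: I pick a non-$Q$-set $X\subset\IR$ of cardinality $\mathfrak q_0$ and a subset $A\subset X$ which fails to be $F_\sigma$ in $X$. If $\chi_A\colon X\to\{0,1\}\subset\IR$ admitted a closed cover $\{C_n\}_{n\in\w}$ of $X$ on which $\chi_A$ is continuous, then each $A\cap C_n$ would be clopen in $C_n$, hence closed in $X$, making $A=\bigcup_n(A\cap C_n)$ an $F_\sigma$-set in $X$ and contradicting the choice of $A$.

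The main step is $\mathfrak q_0\le\bsig$; the idea is to mimic Theorem~\ref{t:ustka}, but since $f$ is not assumed Borel I first use the $Q$-set property, via Lemma~\ref{l:q}, to manufacture a countable Borel family of sets that faithfully records the fibers of $f$. Fix $X\subset\IR$ with $|X|<\mathfrak q_0$, an arbitrary $f\colon X\to\IR$, and a countable base $\{V_n\}_{n\in\w}$ of $\IR$. The proof of Lemma~\ref{l:q} transfers to $\IR$ in place of $2^\w$ because $F_\sigma$-subsets of $\IR$ are $\sigma$-compact, so for every $n$ I can separate $f^{-1}(V_n)$ from $X\setminus f^{-1}(V_n)$ by disjoint $\sigma$-compact sets $K_n,L_n\subset\IR$, and the disjointness forces $K_n\cap X=f^{-1}(V_n)$. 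Writing $K_n$ and $L_n$ as countable unions of compacta and feeding the resulting countable Borel family into \cite[13.5]{Ke}, I obtain a Polish space $\tilde Z$ with a continuous bijection $g\colon\tilde Z\to\IR$ turning each $g^{-1}(K_n)$ into an open set. Setting $Z=g^{-1}(X)$, the identity $(f\circ g)^{-1}(V_n)=Z\cap g^{-1}(K_n)$ makes $f\circ g\restriction Z$ continuous, and since $|Z|<\mathfrak q_0\le\mathfrak b$, the $\sigma$-compact capture argument from Theorem~\ref{t:ustka} yields a $\sigma$-compact set $A=\bigcup_nA_n\supset Z$ in $\tilde Z$; each $g(A_n)\cap X$ is then closed in $X$, the family covers $X$, and $g\restriction A_n\cap Z$ is a homeomorphism, so $f$ is continuous on each piece.

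Finally, the equality $\bsig=\min\{\sig,\mathfrak b,\mathfrak q\}$ follows by combining $\bsig\le\sig$ (trivial) with $\bsig=\mathfrak q_0\le\min\{\mathfrak b,\mathfrak q\}$ for one direction, and observing for the converse that if $|X|<\min\{\sig,\mathfrak q\}$ then a $\sigma$-continuous decomposition $X=\bigcup_nX_n$ of an arbitrary $f$ can be refined, using the $Q$-set property of $X$, to a closed cover $\{C_{n,m}\}_{n,m\in\w}$ on which $f$ remains continuous, giving $\min\{\sig,\mathfrak q\}\le\bsig$ and a fortiori $\min\{\sig,\mathfrak b,\mathfrak q\}\le\bsig$. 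I expect the main obstacle to be the middle step: for a non-Borel $f$ one cannot apply Kechris's refinement directly to $f$, and the role of Lemma~\ref{l:q} is precisely to convert the $Q$-set hypothesis into an explicit countable Borel family that encodes $f$ accurately enough for the strategy of Theorem~\ref{t:ustka} to go through.
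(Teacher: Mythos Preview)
Your arguments for $\bsig\le\mathfrak q_0$ and for $\mathfrak q_0\le\bsig$ are both correct. For the latter the paper is more economical: once $|X|<\mathfrak q_0$ forces $X$ to be a $Q$-set, every function $f:X\to\IR$ is automatically Borel (each $f^{-1}(V_n)$ is $F_\sigma$ in $X$), so Theorem~\ref{t:ustka} applies directly using $|X|<\mathfrak q_0\le\mathfrak b$. Your detour through Lemma~\ref{l:q} and an explicit re-run of the construction from Theorem~\ref{t:ustka} works, but is not needed.

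The genuine gap is in your final paragraph. You assert that $|X|<\mathfrak q$ implies $X$ is a $Q$-set, and use this to refine a $\sigma$-continuous decomposition to a closed one. That implication is false: it is $|X|<\mathfrak q_0$, not $|X|<\mathfrak q$, that guarantees the $Q$-set property. Whenever $\mathfrak q_0<\mathfrak q$ there exist non-$Q$-sets of cardinality strictly below $\mathfrak q$; indeed the very witness $X$ to $\mathfrak q_0$ is such a set. Your intermediate claim $\min\{\sig,\mathfrak q\}\le\bsig$ is therefore unproved, and since it is this claim (not its ``a fortiori'' consequence) that carries the argument, the step collapses. Replacing $\mathfrak q$ by $\mathfrak q_0$ in your refinement trick only yields the tautology $\min\{\sig,\mathfrak q_0\}=\mathfrak q_0\le\bsig$.

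The paper handles the inequality $\min\{\sig,\mathfrak b,\mathfrak q\}\le\mathfrak q_0$ by a case split. If $\mathfrak q_0=\min\{\mathfrak b,\mathfrak q\}$ there is nothing to prove, so assume $\mathfrak q_0<\min\{\mathfrak b,\mathfrak q\}$ and show $\sig\le\mathfrak q_0$. The strict inequality $\mathfrak q_0<\mathfrak q$ yields a $Q$-set $Y\subset\IR$ of cardinality $\mathfrak q_0$, while by definition there is a non-$Q$-set $X\subset\IR$ of the same cardinality. One takes any bijection $f:X\to Y$ and argues that if both $f$ and $f^{-1}$ were $\sigma$-continuous, then $X$ would be covered by countably many sets on which $f$ is a homeomorphism; extending each such homeomorphism by Lavrentiev and using the $Q$-set property of $Y$ together with $\mathfrak q_0<\mathfrak b$, one pulls an arbitrary subset $A\subset X$ back to an $F_\sigma$-set in $X$, contradicting the choice of $X$. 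This construction is where the real content lies, and your refinement argument does not substitute for it.
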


\begin{proof} First we prove that $\bsig\le\mathfrak q_0$. Fix a set $X\subset \IR$ of cardinality $|X|=\mathfrak q_0$, which is not a $Q$-set and hence contains a subset $A\subset X$ which is not of type $F_\sigma$ in $X$. Consider the function $f:X\to\{0,1\}\subset\IR$ defined by $f^{-1}(1)=A$ and $f^{-1}(x)=X\setminus A$. Assuming that the function $f$ is $\bar\sigma$-continuous, we would conclude that $A$ and $B$ are $F_\sigma$-sets in $X$, which contradicts the choice of $X$. Consequently, $\bsig\le|X|=\mathfrak q_0$.

Assuming that $\bsig<\mathfrak q_0$, we can find a subset $X\subset\IR$ of cardinality $|X|=\bsig<\mathfrak q_0$ and a function $g:X\to\IR$ which is not $\bar\sigma$-continuous. The strict inequality $|X|<\mathfrak q_0$ implies that $X$ is a $Q$-space. Consequently, each subset of $X$ is of type $F_\sigma$ in $X$ and the function $g$ is Borel. Since $|X|<\mathfrak q_0\le\mathfrak b$, we can apply Theorem~\ref{t:ustka} and conclude that the function $g$ is $\bar\sigma$-continuous, which contradicts the choice of $g$. This contradiction completes the proof of the equality $\bsig=\mathfrak q_0$.
\smallskip

Since $\mathfrak q_0=\bsig\le\sig$, the equality $\mathfrak q_0=\min\{\sig,\mathfrak b,\mathfrak q\}$ will follow as soon as we prove that the strict inequality $\mathfrak q_0<\min\{\mathfrak b,\mathfrak q\}$ implies $\sig\le\mathfrak q_0$.  Assuming that $\mathfrak q_0<\min\{\mathfrak q,\mathfrak b\}$, find a $Q$-set $Y\subset\IR$ of cardinality $|Y|=\mathfrak q_0$. By the definition of $\mathfrak q_0$ there exists a subset $X\subset\IR$ of cardinality $|X|=\mathfrak q_0$ which is not a $Q$-set. Let $f:X\to Y$ be any bijection. We claim that either $f$ or $f^{-1}$ is not $\sigma$-continuous. To derive a contradiction, assume that both maps $f$ and $f^{-1}$ are $\sigma$-continuous. Then there exists a countable cover $\C$ of $X$ such that for every $C\in\C$ the restriction $f{\restriction}C:C\to f(C)$ is a homeomorphism.  By the Lavrentiev Theorem \cite[3.9]{Ke}, for every $C\in\mathcal C$ the topological embedding $f{\restriction}C$ can be extended to a topological embedding $f_C:\tilde C\to\IR$ of some $G_\delta$-subset $\tilde C$ of $\IR$. 
Since $X$ is not a $Q$-set, there exists a subset $A\subset X$ which is not of type $F_\sigma$ in $X$. On the other hand, for every $C\in\C$ the subset $f(C\cap A)$ is of type $F_\sigma$ in the $Q$-space $Y$. Then there exits a $\sigma$-compact set $K_C\subset \IR$ such that $f(C\cap A)=Y\cap K_C$. By the Souslin Theorem \cite[14.2]{Ke}, the Borel set $K_C\cap f_C(\tilde C)$ is a continuous image of $\w^\w$. Using thsi fact and the definition of the cardinal $\mathfrak b>|Y|\ge |f(C\cap A)|$, we can find a $\sigma$-compact set $\Sigma_C\subset K_C\cap f_C(\tilde C)$ that contains the set $f(C\cap A)$. Then $$f_C(C\cap A)=f(C\cap A)\subset \Sigma_C\cap Y\subset f_C(\tilde C)\cap K_C\cap Y\subset K_C\cap Y=f(C\cap A)$$and hence $f_C(C\cap A)=\Sigma_C\cap Y$.
Then $C\cap A=f_C^{-1}(\Sigma_C\cap Y)=f_C^{-1}(\Sigma_C)\cap X$. The continuity of the map $f_C^{-1}:f_C(\tilde C)\to\tilde C$ and the inclusion $\Sigma_C\subset f_C(\tilde C)$ imply that the preimage $\Lambda_C=f_C^{-1}(\Sigma_C)$ is a $\sigma$-compact subset of the Polish space $\tilde C\subset\IR$ such that $\Lambda_C\cap X=f_C^{-1}(\Sigma_C)\cap X=C\cap A$. Then the union $\Lambda=\bigcup_{C\in\C}\Lambda_C$ is a $\sigma$-compact set in $\IR$ such that
$$\Lambda\cap X=\bigcup_{C\in\C}\Lambda_C\cap X=\bigcup_{C\in\C}C\cap A=A,$$
which means that the set $A$ is of type $F_\sigma$ in $X$. But this contradicts the choice of the set $A$. This contradiction shows that one of the maps $f$ or $f^{-1}$ is not $\sigma$-continuous and hence $\sig\le |A|=|B|=\mathfrak q_0$. 
\end{proof}

\begin{lemma} $\sig\le\min\{\non(\mathcal N),\non(\mathcal M),\non(\mathcal I_P)\}$.
\end{lemma}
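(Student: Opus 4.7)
I would establish the three upper bounds separately, since the three cardinals require genuinely different witnesses.

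\textbf{$\sig \le \non(\mathcal I_P)$.} By the definition of $\non(\mathcal I_P)$, there exists $X \subset (\w+1)^\w$ with $|X| = \non(\mathcal I_P)$ on which $P$ is not $\sigma$-continuous (here $\mathcal I_P$ must be read as the $\sigma$-ideal of sets on which $P{\restriction}X$ \emph{is} $\sigma$-continuous, the unique interpretation making the quoted statement consistent). Since both $(\w+1)^\w$ and $\w^\w$ are zero-dimensional separable metrizable spaces, each embeds topologically into $\IR$ (for instance through $2^\w \subset \IR$); fixing such embeddings $e_1 : (\w+1)^\w \hookrightarrow \IR$ and $e_2 : \w^\w \hookrightarrow \IR$, the composite $f := e_2 \circ P \circ e_1^{-1}$ is a non-$\sigma$-continuous function from $e_1(X) \subset \IR$ of cardinality $\non(\mathcal I_P)$ into $\IR$ (since $\sigma$-continuity is preserved under homeomorphisms of the domain and codomain). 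This forces $\sig \le \non(\mathcal I_P)$.

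\textbf{$\sig \le \non(\mathcal M)$ and $\sig \le \non(\mathcal N)$.} For each, fix a non-meager (respectively, non-null) set $X \subset \IR$ of cardinality $\non(\mathcal M)$ (respectively, $\non(\mathcal N)$), and the task is to build a non-$\sigma$-continuous function $f : X \to \IR$. My approach is to produce $f$ as the restriction to $X$ of a single Borel function $F : \IR \to \IR$ drawn from or adapted from the classical constructions cited in the introduction (Sierpi\'nski, Keldy\v s, Cicho\'n-Morayne-Pawlikowski-Solecki, Darji, Pawlikowski), engineered so that any candidate decomposition $X = \bigcup_n X_n$ into sets on which $F$ is continuous is ``defeated'' by the non-meagerness (non-nullness) of $X$: one of the $X_n$ must remain non-meager (non-null), and on such a large piece $F$ cannot be continuous because the Pawlikowski-style obstructions encoded into $F$ persist in the category (measure) sense.

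\textbf{Main obstacle.} The hard part is the $\non(\mathcal M)$ and $\non(\mathcal N)$ bounds. One cannot simply transfer the Pawlikowski function through a homeomorphism, because a non-meager (non-null) set of size $\non(\mathcal M)$ (resp.\ $\non(\mathcal N)$) need not contain any topological copy of $(\w+1)^\w$ --- a Luzin-like or Sierpi\'nski-like set of size less than $\mathfrak c$ can be Bernstein-like and avoid every perfect set. One must therefore build $F$ ``globally'' on $\IR$ so that its non-$\sigma$-continuity survives restriction to every non-meager (non-null) subset. A Kuratowski-Ulam argument (for $\mathcal M$) or a Fubini / Lebesgue-density argument (for $\mathcal N$) applied to a putative $\sigma$-continuity decomposition of $F{\restriction}X$ should yield a non-meager (non-null) piece on which continuity of $F$ contradicts the Pawlikowski-Zapletal-Sabok dichotomy cited in the introduction. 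Organising this pigeonhole cleanly --- in particular, choosing the right $F$ and controlling the enumeration of ``canonical'' decompositions (e.g., Borel or $F_\sigma$ covers) --- is the main technical burden of the proof.
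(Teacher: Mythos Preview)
Your treatment of $\sig\le\non(\mathcal I_P)$ is correct and matches the paper's argument exactly (embed the zero-dimensional space into $\IR$ and transport the witness).

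For $\sig\le\non(\mathcal M)$ and $\sig\le\non(\mathcal N)$, however, the strategy you outline cannot succeed as stated, and the paper proceeds quite differently. Your plan is to restrict a fixed Borel function $F:\IR\to\IR$ to a non-meager (non-null) set $X$, then argue that in any countable decomposition $X=\bigcup_n X_n$ some $X_n$ is non-meager (non-null) and that $F$ ``cannot be continuous'' on such a piece. But every Borel function on $\IR$ has the Baire property and is therefore continuous restricted to some comeager set $G$; hence $F{\restriction}(X\cap G)$ is continuous and $X\cap G$ is non-meager, so the promised contradiction never materializes. Likewise, by Luzin's theorem a Borel $F$ is $\bar\sigma$-continuous on a conull set, so the measure-side pigeonhole collapses for the same reason. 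No choice of a global Borel $F$ can have the property you need, so the obstacle you flag is not merely technical but fatal to this line.

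The paper's device avoids this entirely: it does not look for a Borel $F$ at all. For $\non(\mathcal M)$ it invokes Grzegorek's theorem that $\IR$ contains a \emph{perfectly meager} set $A$ with $|A|=\non(\mathcal M)$, takes any non-meager $B$ with $|B|=\non(\mathcal M)$, and considers an arbitrary bijection $f:B\to A$. If both $f$ and $f^{-1}$ were $\sigma$-continuous one would get a countable cover $\mathcal C$ of $B$ by pieces on which $f$ is a homeomorphism; Lavrentiev extends each piece to a $G_\delta$-embedding, whose perfect part has image meeting $A$ only in a meager set (by perfect meagerness), forcing every $C\in\mathcal C$, and hence $B$, to be meager --- a contradiction. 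For $\non(\mathcal N)$ the analogue uses the Grzegorek--Ryll-Nardzewski universal-measure-zero set $A$ with $|A|=\non(\mathcal N)$, a bijection $g:B\to A$ from a non-null $B$, Kuratowski extension of each continuous piece, and the pushforward of Lebesgue measure to conclude $B$ is null. The key idea you are missing is the passage through these special small target sets; the witnessing function is then a bare bijection, not a Borel map.
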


\begin{proof}
Choose a subset $X\subset(\w+1)^\w$ of cardinality $|X|=\non(\mathcal I_P)$ such that $P{\restriction}X$ is not $\sigma$-continuous. Being a subspace of the zero-dimensional compact metrizable space $(\w+1)^\w$, the space $X$ can be embedded into the real line $\IR$. Consequently, $\sig\le|X|=\non(\mathcal I_P)$.
\smallskip

Let us recall that a Polish space is called {\em perfect} if it has no isolated points. A subset $A$ of a Polish space $X$ is called {\em perfectly meager} if for any perfect Polish subspace $P\subset X$ the intersection $P\cap A$ is meager in $P$. By a result of  Grzegorek \cite{Greg1}, \cite{Greg2} (see also \cite[5.4]{Miller}), the real line $X$ contains a perfectly meager subset $A$ of cardinality $|A|=\mathrm{non}(\mathcal M)$. By the definition of the cardinal $\non(\mathcal M)$, there exists a non-meager set $B\subset\IR$ of cardinality $|B|=\non(\mathcal M)$. Since $|B|=\non(\mathcal M)=|A|$, there exists a  bijective map $f:B\to A$. We claim that one of the maps $f$ or $f^{-1}$ is not $\sigma$-continuous. To derive a contradiction, assume that the maps $f$ and $f^{-1}$ are $\sigma$-continuous. Then we can find a countable cover $\mathcal C$ of $B$ such that for every $C\in\mathcal C$ the restriction $f{\restriction}C$ is a topological embedding. By the Lavrentiev Theorem \cite[3.9]{Ke}, for every $C\in\mathcal C$ the topological embedding $f{\restriction}C$ can be extended to a topological embedding $f_C:\tilde C\to\IR$ of some $G_\delta$-subset $\tilde C$ of $\IR$. Let $U_C$ be the union of open countable subsets in $\tilde C$. The hereditary Lindel\"of property of the space $\tilde C\subset\IR$ implies that the open set $U_C$ is countable and hence meager in $\IR$. On the other hand, the complement $P_C:=\tilde C\setminus U_C$ is a perfect Polish space. Since $f_C$ is a topological embedding, the image $f_C(P_C)$ is a perfect Polish subspace of $\IR$.
The perfect meagerness of $A$ ensures that the intersection $f_C(P_C)\cap A$ is a meager subset of the Polish space $f_C(P_C)$ and then the preimage $(f_C{\restriction}P_C)^{-1}(A)$ is a meager subset of $P_C$ and of $\IR$, too. Then the preimage $ f_C^{-1}(A)\subset U_C\cup (f_C{\restriction}P_C)^{-1}(A)$ is a meager subset of $\IR$ and so is the set $C=(f_C{\restriction}C)^{-1}(A)$. Then the set $B=\bigcup\C$ is meager in $\IR$, which contradicts the choice of $B$. This contradiction shows that one of the maps $f$ or $f^{-1}$ is not $\sigma$-continuous and hence $\sig\le |A|=|B|=\mathrm{non}(\mathcal M)$.
\smallskip

A subset $A\subset\mathbb R$ has {\em universal measure zero} if $\mu(A)=0$ for any Borel continuous probability measure $\mu$ on $\mathbb R$. By a result of Grzegorek and Ryll-Nardzewski \cite{GR} (see also \cite[5.4]{Miller}), the real line contains a subset $A$ of universal measure zero that has cardinality $|A|=\mathrm{non}(\mathcal N)$.
 By the definition of the cardinal $\non(\mathcal N)$, there exists a subset $B\notin \mathcal N$ of $\IR$ with $|B|=\non(\mathcal N)$.  Since $|B|=\non(\mathcal N)=|A|$, there exists a  bijective map  $g:B\to A$. We claim that the map $g$ is not $\sigma$-continuous. To derive a contradiction, assume that $g$ is $\sigma$-continuous and find a countable cover $\mathcal C$ of $B$ such that for every $C\in\mathcal C$ the restriction $f{\restriction}C$ is continuous. Dividing each set $C\in\mathcal C$ into countably many pieces, we can assume that $C$ is bounded in the real line.
 
 By the Kuratowski Theorem \cite[3.8]{Ke}, for every $C\in\mathcal C$ the continuous map $g{\restriction}C$ can be extended to a continuous map $g_C:\tilde C\to\IR$ defined on some $G_\delta$-subset $\tilde C\subset \bar C$ of $\IR$. Let $\lambda$ be the Lebesgue measure on $\IR$. For every $C\in\C$ consider the subset  $A'_C=\{a\in A:\lambda(g_C^{-1}(a))>0\}$ and observe that it is at most countable. Then the set $A'=\bigcup_{C\in\C}A_C'$ is at most countable, too. Let $P=\bigcup_{C\in\C}g_C^{-1}(A')$ and $\C'=\{C\in\C:\lambda(\tilde C\setminus P)>0\}$. For every $C\in\C'$ the set $\tilde C\setminus P$ has finite non-zero measure. So, we can define  a Borel probability measure $\mu_C$ on $\IR$ letting $\mu_C(X)={\lambda(g_C^{-1}(X)\setminus P)}/{\lambda(\tilde C\setminus P)}$ for any Borel subset $X\subset\IR$. It is easy to see that the measure $\mu_C$ is continuous (which means that the measure of each singleton is zero). Since the set $A$ has universal measure zero, $$0=\mu_C(A)=\lambda(g_C^{-1}(A)\setminus P)=\lambda(B\cap \tilde C\setminus P).$$ Then $\lambda(B\setminus P)\le\sum_{C\in\C}\mu(B\cap C\setminus P)=0$  and $$\lambda(B)=\lambda(B\cap P)+\lambda(B\setminus P)=0+0=0$$as $B\cap P=g^{-1}(A')$ is at most countable. But the equality $\lambda(B)=0$ contradicts the choice of $B$. This contradiction shows that the map $g$ is not $\sigma$-continuous and hence $\sig\le|B|=\mathrm{non}(\mathcal N)$.
\end{proof}

\begin{problem} Which of the strict inequalities $$\bsig<\sig,\;\mathfrak p<\mathfrak q_2,\;\mathfrak q_2<\mathfrak q_1,\;\mathfrak q_1<\mathfrak q_0$$ is consistent with ZFC?
\end{problem}

\begin{problem} Is the strict inequality $\mathfrak q_0<\min\{\mathfrak q,\mathfrak b\}$ consistent?
\end{problem}

\begin{problem} Is $\sig\le\mathfrak b$?
\end{problem}

\begin{remark} Let $\kappa$ be a cardinal. A subset $A$ of a topological space $X$ is called a {\em $G_{<\kappa}$-set} if $A$ can be written as the intersection $A=\bigcap\mathcal A$ of some family $\A$ of open sets in $X$ with $|\mathcal A|<\kappa$. So, $G_{\w_1}$-sets are exactly $G_\delta$-sets.

For a cardinal $\kappa$ let $\mathfrak b_\kappa$ be the smallest cardinality of a subset $A$ of a $G_{<\kappa}$-set $G$ in $\IR$ such that $A$ is not contained in a $\sigma$-compact subset of $G$. It can be shown that $\mathfrak b_\kappa$ is a non-increasing function on the variable $\kappa$ such that
$$\mathfrak b_{\w_1}=\mathfrak b,\;\;\mathfrak b_{\mathfrak c^+}=\w_1,\;\;
\mathfrak b_{\mathfrak p}\ge\mathfrak p,\;\;\mathfrak b_{\mathfrak q_1}\ge\mathfrak q_1,\;\;\mbox{and}\;\;\mathfrak q_1=\max\{\kappa:\kappa\le\mathfrak b_\kappa\}.$$ 
\end{remark}

By \cite{Step}, for every $A\in\mathcal I_P$ the image $P(A)$ is a meager subset of $\w^\w$. This implies that $\non(\mathcal I_P)\le\non(\mathcal M)$ and $\cov(\mathcal I_P)\ge\cov(\mathcal M)$. By \cite{Step}, it is consistent that $\cov(\mathcal I_P)>\cov(\mathcal M)$.

\begin{problem} Is the strict inequality $\non(\mathcal I_P)<\non(\M)$ consistent?
\end{problem} 


\begin{thebibliography}{}

\bibitem{AN} S.I.~Adyan, P.S.~Novikov, {\em On a semicontinuous function}, Moskov. Gos. Ped. Inst. Uč. Zap. {\bf 138} (1958) 3--10.

\bibitem{BMZ} T.~Banakh, M.~Machura, L.~Zdomskyy, {\em On critical cardinalities related to Q-sets}, Math. Bull. Shevchenko Sci. Soc. {\bf 11} (2014), 21--32.

\bibitem{Blass} A.~Blass, {\em Combinatorial cardinal characteristics of the continuum}, Handbook of set theory. Vols. {\bf 1, 2, 3}, 395--489, Springer, Dordrecht, 2010.


\bibitem{CMPS} J.~Cicho\'n, M.~Morayne, J.~Pawlikowski, S.~Solecki, {\em Decomposing Baire functions}, J. Symbolic Logic {\bf 56}:4 (1991), 1273--1283.

\bibitem{Darji} U.~Darji, {\em Countable decomposition of derivatives and Baire $1$ functions}, J. Appl. Anal. {\bf 2}:2 (1996), 119--124.

\bibitem{vD} E.K.~van Douwen, {\em The integers and topology}, Handbook of set-theoretic topology, 111--167, North-Holland, Amsterdam, 1984.


\bibitem{Greg1} E.~Grzegorek, {\em Always of the first category sets}, Proceedings of the 12th winter school on abstract analysis (Srní, 1984). Rend. Circ. Mat. Palermo (2) 1984, Suppl. No. 6, 139--147.

\bibitem{Greg2} E.~Grzegorek, {\em Always of the first category sets. II},  Proceedings of the 13th winter school on abstract analysis (Srni, 1985). Rend. Circ. Mat. Palermo (2) Suppl. No. 10, (1985), 43--48 (1986).

\bibitem{GR} E.~Grzegorek, C.~Ryll-Nardzewski, {\em On universal null sets}, Proc. Amer. Math. Soc. {\bf 81}:4 (1981),  613--617.

\bibitem{JM} S.~Jackson, R.~Mauldin, {\em Some complexity results in topology and analysis}, Fund. Math. {\bf 141}:1 (1992),  75--83.

\bibitem{Ke} A.~Kechris, {\em Classical descriptive set theory}, Graduate Texts in Mathematics, 156. Springer-Verlag, New York, 1995.

\bibitem{Kel} L. Keldi\v s, {\em Sur les fonctions premi\`eres measurables B}, Dokl. Akad. Nauk. SSSR, {\bf 4} (1934), 192--197.

\bibitem{vMP} J.~van Mill, R.~Pol, {\em Baire $1$ functions which are not countable unions of continuous functions}, Acta Math. Hungar. {\bf 66}:4 (1995), 289--300. 

\bibitem{Miller} A.~Miller, {\em Special subsets of the real line}, Handbook of set-theoretic topology, 201--233, North-Holland, Amsterdam, 1984.

\bibitem{PS} J.~Pawlikowski, M.~Sabok, {\em Decomposing Borel functions and structure at finite levels of the Baire hierarchy}, Ann. Pure Appl. Logic {\bf 163}:12 (2012),  1748--1764. 

\bibitem{S36} W.~Sierpi\'nski, {\em Sur un probl\`eme concernant les fonctions semi-continues}, Fund. Math. {\bf 27} (1936), 191--200.

\bibitem{S37} W.~Sierpi\'nski, {\em Sur un probl\`eme concernant les fonctions de premi\`ere classe}, Fund. Math. {\bf 28} (1937), 1--6.

\bibitem{Sol} S.~Solecki, {\em Decomposing Borel sets and functions and the structure of Baire class $1$ functions}, J. Amer. Math. Soc. {\bf 11}:3 (1998), 521--550. 

\bibitem{Step} J.~Stepr\={a}ns, {\em A very discontinuous Borel function}, J. Symbolic Logic {\bf 58}:4 (1993), 1268--1283.

\bibitem{Vau} J.~Vaughan, {\em Small uncountable cardinals and topology}, Open problems in topology, 195--216, North-Holland, Amsterdam, 1990.

\bibitem{Zap} J.~Zapletal, {\em Descriptive set theory and definable forcing}, Mem. Amer. Math. Soc. {\bf 167}:763 (2004), viii+141 pp.



\end{thebibliography}
\end{document}